\documentclass[11pt]{amsart}
\usepackage{amsfonts,amssymb,amsthm,eucal}

\topmargin=0in \oddsidemargin=0in \evensidemargin=0in \textwidth=6.5in
\textheight=9.0in

\newtheorem{thm}{Theorem}
\newtheorem{cor}[thm]{Corollary}

\newtheorem{prop}[thm]{Proposition}

\newcommand{\R}{\mathbb{R}}
\newcommand{\E}{\mathbb{E}}
\newcommand{\Prob}{\mathbb{P}}

\newcommand{\C}{\mathbb{C}}
\newcommand{\CM}{\mathcal{C}}

\DeclareMathOperator{\Cov}{Cov}

\renewcommand{\Re}{\operatorname{Re}}
\renewcommand{\Im}{\operatorname{Im}}
\newcommand{\abs}[1]{\left\vert #1 \right\vert}

\allowdisplaybreaks[4]
\numberwithin{equation}{section}

\title[Random circulant matrices]{Some results on random circulant matrices}

\author{Mark W.\ Meckes}

\address{Department of Mathematics, Case Western Reserve University,
Cleveland, Ohio 44106, U.S.A.}

\email{mark.meckes@case.edu}

\begin{document}


\begin{abstract}
This paper considers random (non-Hermitian) circulant matrices, and
proves several results analogous to recent theorems on non-Hermitian
random matrices with independent entries. In particular, the limiting
spectral distribution of a random circulant matrix is shown to be
complex normal, and bounds are given for the probability that a
circulant sign matrix is singular.
\end{abstract}

\maketitle


\section{Introduction}
Given a sequence $X_0, X_1, \dotsc$ of independent complex random
variables, denote by $\CM_n$ the $n\times n$ random circulant matrix
with first row $X_0,\dotsc, X_{n-1}$:
\[
\CM_n = \begin{bmatrix}
X_0 & X_1 & \cdots & \cdots & X_{n-2} & X_{n-1} \\
X_{n-1} & X_0 & X_1 & & & X_{n-2} \\
\vdots & X_{n-1} & X_0 & &&  \vdots \\
\vdots & & & \ddots & & \vdots \\
X_2 & & & & X_0 & X_1 \\
X_1 & X_2 & \cdots & \cdots & X_{n-1} & X_0
\end{bmatrix}.
\]
It is well-known (and easy to verify) that the eigenvalues of $\CM_n$
are
\[
\sum_{j=0}^{n-1} \omega_n^{jk} X_j,
  \quad k=0,\dotsc,n-1,
\]
where $\omega_n = e^{2\pi i/n}$. The eigenvalues of random circulant
matrices have only recently been studied explicitly in the literature,
e.g.\ in \cite{BM,BoseSen,MMS}, which consider real symmetric
circulant matrices and variants of them. Other models of random
matrices lying in some linear subspace of matrix space have also been
studied recently in \cite{BoseSen,BDJ,Chatterjee,HM,MMS}, among
others. These papers mainly prove results analogous to classical
theorems for Wigner-type random matrices, i.e.\ symmetric (or
Hermitian) random matrices whose entries are all independent except
for the symmetry constraint.

This paper mainly considers the eigenvalues of the random circulant
matrices $\CM_n$ with no symmetry constraint, and prove results
analogous to recent theorems about random matrices with all
independent entries. In Section \ref{S:LSD} we investigate the
limiting spectral distribution of large-dimensional random circulant
matrices. In Section \ref{S:Gaussian} we consider the joint
distribution of eigenvalues of circulant matrices with Gaussian
entries and observe some consequences, especially for the
distributions of extreme eigenvalues. Finally, in Section
\ref{S:singularity} we investigate the probability that a circulant
matrix with $\pm 1$ Bernoulli entries is singular.

\section{Limiting spectral distribution}\label{S:LSD}

We denote by
\begin{equation}\label{E:lambda}
\lambda_k=\frac{1}{\sqrt{n}}\sum_{j=0}^{n-1} \omega_n^{jk} X_j
\end{equation}
for $k=0,\dotsc, n-1$ the eigenvalues of $n^{-1/2}\CM_n$, and by
\[
\mu_n = \frac{1}{n}\sum_{k=0}^{n-1} \delta_{\lambda_k}
\]
the empirical spectral measure of $n^{-1/2} \CM_n$.  Theorems
\ref{T:sm-ip} and \ref{T:sm-as} show that as $n\to \infty$, $\mu_n$
converges to a universal limiting measure, namely the standard complex
Gaussian measure $\gamma_\C$ with density $\frac{1}{\pi}
e^{-\abs{z}^2}$ with respect to Lebesgue measure on $\C$. Theorem
\ref{T:sm-ip} shows that this convergence holds in probability under
very weak assumptions on the random variables $X_j$ (including in
particular the case of i.i.d.\ random variables with finite variance);
Theorem \ref{T:sm-as} strengthens this to almost sure convergence
under much stronger assumptions.

Theorems \ref{T:sm-ip} and \ref{T:sm-as} are analogues for circulant
matrices of the circular law for eigenvalues of random matrices with
independent entries, which was recently proved by Tao and Vu in
\cite{TVK} in the almost sure sense for i.i.d.\ entries with finite
variance.  It is likely that the conclusion of Theorem \ref{T:sm-as}
also holds in this level of generality.  However, the result of
\cite{TVK} and other results leading up to it were made possible by
recent advances in controlling how close a random matrix with i.i.d.\
entries is to being singular. As discussed in Section
\ref{S:singularity} below, random circulant matrices are frequently
much more likely to be singular; thus it may be difficult to prove the
optimal result for circulant matrices. (On the other hand, the fact
that circulant matrices are normal may make such a result approachable
via the classical moment method used extensively for Hermitian random
matrices.)

\medskip

\begin{thm}\label{T:sm-ip}
Suppose that the $X_j$ satisfy
\begin{equation}\label{E:sm-moments}
\E X_j = 0, \quad \E X_j^2 = \alpha, \quad \E \abs{X_j}^2 = 1,
\end{equation}
for some $\alpha\in \C$ and 
\begin{equation}\label{E:sm-Lindeberg}
\lim_{n\to\infty}\frac{1}{n} \sum_{j=0}^{n-1} 
  \E \big(\abs{X_j}^2 1_{\abs{X_j}>\varepsilon \sqrt{n}}\big) = 0
\end{equation}
for every $\varepsilon > 0$. Then $\mu_n$ converges in expectation and
weak-$*$ in probability to $\gamma_\C$, the standard complex Gaussian
measure on $\C$.
\end{thm}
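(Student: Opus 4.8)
The plan is to use the explicit eigenvalue formula \eqref{E:lambda} together with the Lindeberg--Feller central limit theorem, exploiting the fact that each $\lambda_k$ is \emph{already} a normalized sum of independent random variables. The only finite-Fourier input needed is the orthogonality identity: $\frac1n\sum_{j=0}^{n-1}\omega_n^{jm}=1$ if $n\mid m$, and $0$ otherwise. Using this and \eqref{E:sm-moments}, one computes for any $k,\ell$ that $\E\abs{\lambda_k}^2=1$, $\E\lambda_k^2=\alpha\cdot 1_{n\mid 2k}$, $\E[\lambda_k\overline{\lambda_\ell}]=1_{n\mid k-\ell}$, and $\E[\lambda_k\lambda_\ell]=\alpha\cdot 1_{n\mid k+\ell}$. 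In particular the indices $k$ with $n\mid 2k$ — there are at most two of them, namely $k=0$ and, if $n$ is even, $k=n/2$ — behave specially; call the remaining indices \emph{good}. The at most two exceptional eigenvalues contribute mass at most $2/n$ to $\mu_n$ and to $\E\mu_n$, so they can be discarded everywhere that follows.

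The first step is the single-eigenvalue limit: for every bounded continuous $f : \C\to\R$, $\E f(\lambda_k)\to\int f\,d\gamma_\C$ \emph{uniformly} over good $k$. For good $k$ the computation above shows that the real vector $(\Re\lambda_k,\Im\lambda_k)$ is a sum of $n$ independent mean-zero terms $\tfrac1{\sqrt n}\omega_n^{jk}X_j$ whose covariances sum \emph{exactly} to $\tfrac12 I_2$, the covariance of $\gamma_\C$. Since $\abs{\omega_n^{jk}}=1$, the Lindeberg condition for this array is literally \eqref{E:sm-Lindeberg}, and in particular does not depend on $k$. The bivariate Lindeberg--Feller CLT then gives $\lambda_k\Rightarrow\gamma_\C$; and because both the Lindeberg function and the covariance matrix are identical for all good $k$, a quantitative form of the CLT (Lindeberg's swapping argument for $f\in C_b^3$, extended to $C_b$ by mollification, using the uniform bound $\E\abs{\lambda_k}^2=1$ to control tails) makes the convergence uniform in good $k$. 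Averaging over $k$ and adding back the negligible exceptional terms gives $\E\int f\,d\mu_n=\frac1n\sum_k\E f(\lambda_k)\to\int f\,d\gamma_\C$, which is convergence in expectation.

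For weak-$*$ convergence in probability, Chebyshev's inequality reduces matters to showing $\Var\bigl(\frac1n\sum_k f(\lambda_k)\bigr)=\frac1{n^2}\sum_{k,\ell}\Cov\bigl(f(\lambda_k),f(\lambda_\ell)\bigr)\to 0$ for bounded continuous $f$. Call $(k,\ell)$ a \emph{good pair} if $k$ and $\ell$ are good indices with $n\nmid k-\ell$ and $n\nmid k+\ell$; for each $k$ only $O(1)$ values of $\ell$ fail this, so the $O(n)$ bad pairs contribute only $O(1/n)$ to the variance (each covariance being at most $4\norm{f}_\infty^2$). For a good pair the moment identities give $\E\lambda_k^2=\E\lambda_\ell^2=0$ and $\E[\lambda_k\overline{\lambda_\ell}]=\E[\lambda_k\lambda_\ell]=0$, so the four-dimensional vector $(\Re\lambda_k,\Im\lambda_k,\Re\lambda_\ell,\Im\lambda_\ell)$ is a sum of independent mean-zero terms with covariances summing exactly to $\tfrac12 I_4$. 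Each summand again has modulus $\abs{X_j}/\sqrt n$ up to a constant, so the Lindeberg condition reduces to \eqref{E:sm-Lindeberg} uniformly, and the quadrivariate CLT — in the same quantitative, uniform form — shows $(\lambda_k,\lambda_\ell)$ converges, uniformly over good pairs, to a pair of independent standard complex Gaussians. Applying this to the test function $(z,w)\mapsto f(z)f(w)$ and combining with the single-eigenvalue limit yields $\Cov\bigl(f(\lambda_k),f(\lambda_\ell)\bigr)\to 0$ uniformly over good pairs, so the full variance tends to $0$.

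The one point requiring genuine care is the \emph{uniformity} in the two CLT applications: this is exactly what licenses the passage from ``every eigenvalue, respectively every pair, is asymptotically a standard complex Gaussian, respectively a pair of independent ones'' to the corresponding statements about $\E\mu_n$ and about $\Var\bigl(\int f\,d\mu_n\bigr)$. It is not difficult here, precisely because the data feeding the CLT — the Lindeberg function \eqref{E:sm-Lindeberg}, the (exactly correct) covariance matrices, and the uniform second-moment bound $\E\abs{\lambda_k}^2=1$ used in the mollification step — do not depend on the index; everything else is either the exact identity for $\frac1n\sum_j\omega_n^{jm}$ or elementary counting of bad (pairs of) indices. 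One could alternatively reduce to the case of i.i.d.\ Gaussian $X_j$ first and then compute directly, since for Gaussian $X_j$ the whole vector $(\lambda_0,\dots,\lambda_{n-1})$ is Gaussian with the covariances listed above; but that reduction is essentially the same Lindeberg argument in disguise.
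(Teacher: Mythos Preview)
Your proposal is correct and follows essentially the same approach as the paper: compute the covariance structure of $\lambda_k$ (respectively of $(\lambda_k,\lambda_\ell)$) via the orthogonality relation for $n$th roots of unity, apply a uniform multivariate Lindeberg CLT over the good indices (respectively good pairs), and conclude convergence in probability by the second-moment method. The only cosmetic difference is that the paper tests against convex sets $A\subseteq\C$ using a quantitative Lindeberg bound from \cite{BR}, whereas you test against bounded continuous $f$ via the Lindeberg swapping argument with mollification; both routes yield the same uniformity and the same conclusion.
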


Observe that the hypotheses cover the case of i.i.d.\ complex random
variables with finite variance, as well as real random variables and
rotationally invariant distributions satisfying the Lindeberg
condition \eqref{E:sm-Lindeberg}.

\begin{proof}
Without loss of generality we may assume $\alpha \in \R$.
Observe that for a measurable set $A\subseteq \C$,
\[
\E \mu_n(A) = \frac{1}{n} \sum_{k=0}^{n-1} \Prob[\lambda_k \in A]
=\frac{1}{n} \sum_{k=0}^{n-1} \Prob\left[\frac{1}{\sqrt{n}}
  \sum_{j=0}^{n-1} \omega_n^{jk} X_j \in A\right].
\]
We consider $\lambda_k$ as a sum of independent random vectors in
$\R^2\cong \C$, so we will need 
\[
\Cov(\omega_n^{jk} X_j) =
  \begin{pmatrix} \E (\Re \omega_n^{jk}X_j)^2 
    & \E (\Re \omega_n^{jk}X_j)(\Im \omega_n^{jk}X_j) \\
     \E (\Re \omega_n^{jk}X_j)(\Im \omega_n^{jk}X_j) 
     & \E (\Im \omega_n^{jk}X_j)^2
  \end{pmatrix}.
\]
The identities
\begin{equation}\begin{split}\label{E:Re-Im}
\tfrac{1}{2} (w+\overline{w}) z &= (\Re w)(\Re z) + i (\Re w)(\Im z),\\
\tfrac{1}{2} (w-\overline{w}) z &= -(\Im w)(\Im z) + i (\Re w)(\Im z).
\end{split}\end{equation}
for $w,z\in \C$ are useful. Letting $w=z=\omega_n^{jk}X_j$,
\[
\sum_{j=0}^{n-1} \E (\Re \omega_n^{jk}X_j)^2 
  = \frac{1}{2}\Re \sum_{j=0}^{n-1} \E (\omega_n^{2jk}X_j^2 + \abs{X_j}^2)
  = \frac{1}{2}\Re \sum_{j=0}^{n-1} (\alpha \omega_n^{2jk} + 1).
\]
Since $\omega_n^{2k}$ is an $n$th root of unity, $\sum_{j=0}^{n-1}
\omega_n^{2jk}=0$ unless $\omega_n^{2k}= 1$, which is the case only if
$k=0$ or $k=n/2$. Thus unless $k=0$ or $n/2$,
\[
\frac{1}{n}\sum_{j=0}^{n-1} \E (\Re \omega_n^{jk})^2 = \frac{1}{2}.
\]
The other covariances are computed similarly and it follows that for
$k\neq 0,n/2$
\[
\frac{1}{n}\sum_{j=0}^{n-1} \Cov(\omega_n^{jk} X_j) = \frac{1}{2} I_2.
\]

Since $\vert\omega_n^{jk}X_j\vert = \abs{X_j}$, by
\eqref{E:sm-Lindeberg} we can now apply a quantitative two-dimensional
version of Lindeberg's central limit theorem to the complex random
variables $\{ \omega_n^{jk}X_j \mid 0\le j \le n-1\}$. Let $A
\subseteq \C$ be measurable and convex and assume $k\neq 0,n/2$. By
the proof of \cite[Corollary 18.2]{BR}), there is a function $h(n)$
with $\lim_{n\to\infty} h(n)=0$, which depends on $A$ and the rate of
convergence in \eqref{E:sm-Lindeberg} but is independent of $k$, such
that
\[
\abs{ \Prob \left[ \frac{1}{\sqrt{n}} \sum_{j=0}^{n-1} \omega_n^{jk}
    X_j \in A \right] - \gamma_\C(A)} \le h(n).
\]
Therefore
\begin{equation}\label{E:M1}
\abs{ \E \mu_n(A) - \gamma_\C(A) } \le \frac{1}{n} \sum_{k=0}^{n-1}
\abs{ \Prob\big[ \lambda_k \in A \big] - \gamma_\C(A) } 
\le \frac{2+(n-2)h(n)}{n} \xrightarrow{n\to \infty} 0.
\end{equation}
Thus the claimed convergence holds in expectation.

\medskip

Next observe that 
\[
\E \mu_n(A)^2 = \E \left[\frac{1}{n^2} \sum_{k,\ell=0}^{n-1}
    1_{\lambda_k\in A} 1_{\lambda_\ell \in A} \right]
= \frac{1}{n^2} \sum_{k,\ell = 0}^{n-1}
    \Prob \big[ (\lambda_k, \lambda_\ell) \in A \times A \big].
\]
As before, we can compute the sums of the relevant covariance matrices
using \eqref{E:Re-Im}, in this case with $w=\omega_n^{jk}X_j$ and
$z=\omega_n^{j\ell}X_j$. We obtain
\[
\frac{1}{n}\sum_{j=0}^{n-1} \Cov(\omega_n^{jk} X_j, \omega_n^{j\ell} X_j)
    = \frac{1}{2}I_4
\]
except when $k=0$ or $n/2$, $\ell=0$ or $n/2$, $k+\ell = 0$ or $n$, or
$k=\ell$.  These exceptional cases account for fewer than $6n$ of the
$n^2$ possible values of $(k,\ell)$. Applying the four-dimensional case
of Lindeberg's theorem similarly to above, we have
\begin{equation}\label{E:M2}
\abs{ \E \mu_n(A)^2 - \gamma_\C(A)^2 } \le 
    \frac{1}{n^2} \sum_{k,\ell=0}^{n-1} \abs{ \Prob\big[
      (\lambda_k,\lambda_\ell) \in A \times A\big] - \gamma_\C(A)^2
    } \xrightarrow{n\to\infty} 0.
\end{equation}
We are of course using here that standard Gaussian measure on $\C^2$
is the two-fold product of $\gamma_\C$.

From \eqref{E:M1} and \eqref{E:M2} it follows that
\[
\E \big[\mu_n(A) - \gamma_\C(A)\big]^2 
    = \big[\E \mu_n(A)^2 - \gamma_\C(A)^2 \big]
      - 2\gamma_\C(A) \big[\E \mu_n(A) - \gamma_\C(A)\big]
    \xrightarrow{n\to\infty} 0.
\]
Thus for every convex measurable $A\subset \C$, the random variable
$\mu_n(A)$ converges to $\gamma_\C(A)$ in $L^2$ and hence in
probability.
\end{proof}

\medskip

Bose and Mitra \cite{BM} proved earlier a result analogous to Theorem
\ref{T:sm-ip} for real symmetric circulant matrices, which is a
circulant matrix analogue of Wigner's semicircle law; the limiting
distribution in this case is a real Gaussian measure. This result was
strengthened from convergence in probability to almost sure
convergence by Massey, Miller, and Sinsheimer \cite{MMS}. The main
result of \cite{BM} is a similar result for circulant Hankel matrices,
which amounts to studying the singular values instead of eigenvalues
of $\CM_n$ (see \cite[Remark 1.2]{BDJ} about the relationship between
Hankel and Toeplitz matrices). Since circulant matrices are normal,
their singular values are simply the moduli of their eigenvalues; thus
this latter result is essentially a corollary of Theorem
\ref{T:sm-ip}. The proof of Theorem \ref{T:sm-ip} follows the basic
outline of the proofs of \cite{BM}, which assume the $X_j$ are i.i.d.\
with finite third absolute moments; the greater generality of Theorem
\ref{T:sm-ip} is achieved by applying Lindeberg's theorem, instead of
the Berry-Esseen theorem as in \cite{BM}.

The statement of Theorem \ref{T:sm-ip} assumes that the same sequence
of random variables $X_j$ is used to construct $\CM_n$ for every
$n$. The proof shows however that the result generalizes directly to
circulant matrices constructed from a triangular array of random
variables.  The same comment applies to Theorem \ref{T:sm-as} below.

\medskip

\begin{thm}\label{T:sm-as}
Suppose that, in addition to \eqref{E:sm-moments} one of the following
holds.
\begin{enumerate}
\item There exists a $K>0$ such that for each $j$, $\abs{X_j}\le K$
almost surely.
\item There exists a $K>0$ such that for each $j$, the distribution of
  $X_j$ satisfies a quadratic transportation cost inequality with
  constant $K$ (see below for the meaning of this).
\end{enumerate}
Then $\mu_n$ converges weak-$*$ to $\gamma_\C$ almost surely.
\end{thm}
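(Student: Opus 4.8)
The plan is to reduce almost sure weak-$*$ convergence to the almost sure convergence of $\int f\,d\mu_n$ for $f$ in a fixed countable family, and to obtain the latter from concentration of measure, the key point being that spectral statistics of $\CM_n$ are Lipschitz functions of $(X_0,\dots,X_{n-1})$ with a very small Lipschitz constant. So I would first fix once and for all a countable family $\mathcal{F}$ of bounded Lipschitz functions $f\colon\C\to\R$ that is convergence-determining for probability measures on $\C$ (such a family exists because $C_0(\C)$ is separable and Lipschitz functions are dense in it); since every $\mu_n$ and the limit $\gamma_\C$ are probability measures, knowing $\int f\,d\mu_n\to\int f\,d\gamma_\C$ for all $f\in\mathcal{F}$ already forces $\mu_n\to\gamma_\C$ weak-$*$. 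Thus it suffices to prove, for each fixed $f\in\mathcal{F}$, that $\int f\,d\mu_n\to\int f\,d\gamma_\C$ almost surely, and then to intersect the countably many resulting almost sure events.

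Fix $f\in\mathcal{F}$ with Lipschitz constant $L$, and regard $F_n(X):=\int f\,d\mu_n=\tfrac1n\sum_{k=0}^{n-1}f(\lambda_k)$ as a function of $X=(X_0,\dots,X_{n-1})\in\C^n\cong\R^{2n}$. The structural observation I would use is that the matrix $V=\bigl(n^{-1/2}\omega_n^{jk}\bigr)_{k,j}$ is unitary and $(\lambda_0,\dots,\lambda_{n-1})^T=VX$, so
\[
\abs{F_n(X)-F_n(X')}\le\frac{L}{n}\sum_{k=0}^{n-1}\abs{\lambda_k-\lambda_k'}
  \le\frac{L}{\sqrt n}\,\norm{VX-VX'}_2=\frac{L}{\sqrt n}\,\norm{X-X'}_2 ,
\]
that is, $F_n$ is $(L/\sqrt n)$-Lipschitz on $\R^{2n}$ with the Euclidean metric --- a bound with no analogue for matrices with independent entries, and the reason the circulant case is accessible. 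Next I would note that hypothesis (1) or (2), combined with \eqref{E:sm-moments}, implies the Lindeberg condition \eqref{E:sm-Lindeberg}: trivially in case (1) as soon as $\varepsilon\sqrt n>K$, and in case (2) because a quadratic transportation cost inequality forces sub-Gaussian tails on each $X_j$. Hence Theorem~\ref{T:sm-ip} applies and gives $\E F_n=\E\int f\,d\mu_n\to\int f\,d\gamma_\C$, so what remains is to show that $F_n-\E F_n\to0$ almost surely, for which, by the Borel--Cantelli lemma, it is enough to prove $\sum_n\Prob\bigl[\abs{F_n-\E F_n}>t\bigr]<\infty$ for every $t>0$.

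This is where the two hypotheses are used, and it is the only real obstacle once the Lipschitz bound is in hand. Under (2), each $X_j$ regarded in $\R^2$ satisfies a quadratic transportation cost inequality with constant $K$, so Marton's tensorization argument shows the product law of $(X_0,\dots,X_{n-1})$ on $\R^{2n}$ satisfies one with the same constant; the standard passage from a quadratic transportation cost inequality to sub-Gaussian concentration for Lipschitz functions, applied with the $(L/\sqrt n)$-Lipschitz bound, then yields $\Prob\bigl[\abs{F_n-\E F_n}>t\bigr]\le 2\exp\bigl(-nt^2/(2KL^2)\bigr)$, which is summable in $n$. Under (1), each $X_j$ is supported in a set of diameter at most $2K$, so one is on a product of bounded spaces and can invoke Talagrand's convex distance inequality: applying it to the sublevel set $A=\{F_n\le M_n\}$ with $M_n$ a median of $F_n$, and bounding the convex distance from below using the uniform weight vector, one finds that $F_n(X)\ge M_n+t$ forces the convex distance of $X$ to $A$ to be at least a constant (independent of $n$) times $\sqrt n\,t^2$, whence $\Prob\bigl[\abs{F_n-M_n}>t\bigr]\le C\exp(-c_f n t^4)$; since $\abs{M_n-\E F_n}\le\E\abs{F_n-M_n}=O(n^{-1/4})\to0$, a bound of the same shape holds about $\E F_n$ and is again summable (the exponent $t^4$ is wasteful and can be sharpened to $t^2$ by taking the test functions smooth and splitting $F_n$ into convex parts, but summability is all one needs). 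In either case Borel--Cantelli gives $F_n-\E F_n\to0$ a.s., hence $\int f\,d\mu_n\to\int f\,d\gamma_\C$ a.s., and intersecting over $\mathcal{F}$ yields $\mu_n\to\gamma_\C$ weak-$*$ almost surely. (Since the whole argument works one value of $n$ at a time, it applies verbatim to triangular arrays.) I expect the Lipschitz bound and the choice of the right concentration inequality in each case to be the only substantive points; there is no deeper difficulty here.
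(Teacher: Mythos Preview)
Your proposal is correct and follows the same architecture as the paper: reduce to a countable family of test functions, compute the $n^{-1/2}$ Lipschitz bound for $F_n$ via unitarity of the discrete Fourier transform, invoke Theorem~\ref{T:sm-ip} for the mean, and finish by concentration plus Borel--Cantelli. Case~(2) is handled identically in both.

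The one genuine difference is in case~(1). You apply Talagrand's convex distance inequality directly to the sublevel set of $F_n$, lower-bound the convex distance by $n^{-1/2}$ times the Hamming distance via the uniform weight vector, and accept the resulting $\exp(-c_f n t^4)$ tail; this is valid and summable, as you note. The paper instead uses the sharper corollary of Talagrand's inequality for \emph{convex} Lipschitz functions, which gives $\exp(-cnt^2)$, and handles the non-convexity of a smooth compactly supported $f$ by writing $f=(f+g)-g$ for an explicit $g$ (quadratic on the support of $f$, linear outside) chosen so that both $f+g$ and $g$ are convex and Lipschitz; one then applies concentration to each piece separately. You allude to exactly this sharpening in your parenthetical remark. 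Your route is more elementary and avoids the convex-splitting trick at the cost of a weaker (but still sufficient) tail; the paper's route gives the natural sub-Gaussian bound but requires the auxiliary construction of $g$.
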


Recall that a probability measure $\mu$ on $\R^d$ is said to satisfy a
quadratic transportation cost inequality with constant $K>0$ if
\[
\inf_{\pi\in\Pi(\mu,\nu)} \int\int \abs{x-y}^2 \ d\pi(x,y) 
  \le \sqrt{2KH(\mu | \nu)}
\]
for every probability measure $\nu$ on $\R^d$.  Here $\Pi(\mu,\nu)$ is
the class of probability measures on $\R^d\times\R^d$ with marginals
$\mu$ and $\nu$ respectively, and $H(\mu \vert \nu)$ is relative
entropy. Such an inequality is satisfied in particular if $\mu$
satisfies a logarithmic Sobolev inequality.  See \cite[Chapter 6]{Ledoux}
for background and references.

\begin{proof}
We begin with the assumption that a quadratic transportation cost
inequality is satisfied.  This assumption implies (and is essentially
equivalent to, see \cite{Gozlan}) the following concentration
inequality for Lipschitz functions of $(X_1,\dotsc,X_n)$.  Let
$F:\C^n\to\R$ have Lipschitz constant $\abs{F}_{\mathrm{Lip}}=L$. Then
\begin{equation}\label{E:conc}
\Prob\big[\abs{F(X_1,\dotsc,X_n)-\E F(X_1,\dotsc,X_n)} \ge t\big]
  \le C e^{-ct^2/K^2L^2}
\end{equation}
for every $t>0$, where $c,C>0$ are absolute constants. Now for a
compactly supported smooth function $f:\C\to\R$, define
\[
F(x_1,\dotsc,x_n) = \frac{1}{n}\sum_{j=1}^n f(x_j),
\]
so that $F(X_1,\dotsc,X_n) = \int_\C f \ d\mu_n$. Then 
\[
\abs{F(x_1,\dotsc,x_n)-F(y_1,\dotsc,y_n)} 
  \le \frac{\abs{f}_{\mathrm{Lip}}}{n} \sum_{j=1}^n \abs{x_j-y_j} 
  \le \frac{\abs{f}_{\mathrm{Lip}}}{\sqrt{n}} \sqrt{\sum_{j=1}^n \abs{x_j-y_j}^2},
\]
so $\abs{F}_{\mathrm{Lip}}\le n^{-1/2}
\abs{f}_{\mathrm{Lip}}$. Combining Theorem \ref{T:sm-ip},
\eqref{E:conc}, and the Borel-Cantelli lemma, we obtain that for each
compactly supported smooth function $f:\C\to \R$,
\[
\int_\C f\ d\mu_n \xrightarrow{n \to \infty} \int_\C f\ d\gamma_\C
\]
almost surely. Applying this for a countable dense family of such $f$
proves the theorem.

\medskip

The case of bounded entries is treated similarly using Talagrand's
famous concentration inequality for convex Lipschitz functions of
bounded random variables \cite{Talagrand-ihes} (also see \cite[Section
  4.2]{Ledoux}); in this case an extra step is required to handle
non-convex test functions $f:\C\to\R$.

As above, let $f:\C\to\R$ be compactly supported and smooth.
 Let $R>0$ be
such that $f(x)=0$ for $\abs{x}\ge R$ and let $\lambda\ge 0$ be such
that the eigenvalues of the Hessian $D^2 f$ are bounded below by
$-\lambda$. For $x\in \C$ define
\[
g(x) = \begin{cases} \frac{\lambda}{2}\abs{x}^2 & \mbox{if }\abs{x}\le R,\\
  \lambda R \left(\abs{x} - \frac{R}{2}\right) & \mbox{if }\abs{x} \ge R. 
  \end{cases}
\]
Then $\abs{g}_{\mathrm{Lip}}=\lambda R$ and so
$\abs{f+g}_{\mathrm{Lip}}\le \abs{f}_{\mathrm{Lip}}+\lambda
R$. Furthermore, since $D^2(f+g)\ge 0$, both $g$ and $f+g$ are
convex. Applying the above argument to $f+g$ and $g$ using Talagrand's
concentration inequality in place of \eqref{E:conc} implies that with
probability $1$,
\[
\int_\C (f+g)\ d\mu_n \xrightarrow{n \to \infty} \int_\C (f+g)\ d\gamma_\C
\quad \mbox{and} \quad
\int_\C g\ d\mu_n \xrightarrow{n\to\infty} \int_\C g\ d\gamma_\C
\]
and hence
\[
\int_\C f\ d\mu_n \xrightarrow{n\to\infty} \int_\C f\ d\gamma_\C.
\]
Again, applying this for a countable dense family of such $f$ proves
the theorem.
\end{proof}

\medskip

We remark that a slight generalization of the argument in the last
paragraph shows that convex Lipschitz functions on $\R^d$ form a
convergence-determining class for the family of probability measures
on $\R^d$ with respect to which such functions are integrable. This
fact is presumably well-known to experts but we could not find a
statement of it in the literature.

\section{Gaussian circulant matrices and extreme eigenvalues}\label{S:Gaussian}

The following result is a circulant analogue of classical formulas
(found, e.g., in \cite{Mehta}) for the joint distribution of
eigenvalues of random matrices with independent complex Gaussian
entries.

\medskip

\begin{prop}\label{T:complex-normal}
Let each $X_j$ have the standard complex normal distribution. Then the
sequence $\lambda_0,\dotsc,\lambda_{n-1}$ of eigenvalues of
$n^{-1/2}\CM_n$ is distributed as $n$ independent standard complex
normal random variables.
\end{prop}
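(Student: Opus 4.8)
The plan is to recognize the map $(X_0,\dots,X_{n-1})\mapsto(\lambda_0,\dots,\lambda_{n-1})$ as multiplication by a unitary matrix and then invoke the rotational invariance of the i.i.d.\ complex Gaussian law. Write $\lambda=FX$, where $X=(X_0,\dots,X_{n-1})^T$, $\lambda=(\lambda_0,\dots,\lambda_{n-1})^T$, and $F$ is the $n\times n$ matrix with entries $F_{kj}=n^{-1/2}\omega_n^{jk}$ for $0\le j,k\le n-1$; this is precisely the content of \eqref{E:lambda}. The first step is to check that $F$ is unitary: $(FF^*)_{k\ell}=\frac1n\sum_{j=0}^{n-1}\omega_n^{j(k-\ell)}=\delta_{k\ell}$, since $\omega_n^{k-\ell}$ is an $n$th root of unity, equal to $1$ exactly when $k=\ell$.

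Next I would record the needed invariance property. If $X$ has i.i.d.\ standard complex normal coordinates, then $X$ has density $\pi^{-n}e^{-\norm{z}^2}$ on $\C^n\cong\R^{2n}$, where $\norm{z}^2=\sum_{j}\abs{z_j}^2$. A unitary $U$, viewed as a real-linear map on $\R^{2n}$, is orthogonal: it preserves the Hermitian inner product and hence its real part, the Euclidean inner product on $\R^{2n}$. Consequently $U$ preserves both $\norm{\cdot}$ and Lebesgue measure, so $UX$ has the same density as $X$. Applying this with $U=F$ yields $\lambda=FX\stackrel{d}{=}X$, which is exactly the assertion that $\lambda_0,\dots,\lambda_{n-1}$ are $n$ independent standard complex normal random variables.

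There is essentially no obstacle here; the proof is a two-line observation. The only points meriting a word of care are the normalization conventions (that ``standard complex normal'' means $\E Z=0$, $\E Z^2=0$, $\E\abs{Z}^2=1$, with density $\pi^{-1}e^{-\abs z^2}$, consistent with the measure $\gamma_\C$ of Section \ref{S:LSD}) and the elementary fact that a complex-linear image of a complex Gaussian vector is again Gaussian. The latter provides an alternative ending: since $\lambda=FX$ is Gaussian, its law is determined by $\E\lambda_k=0$, $\E[\lambda_k\overline{\lambda_\ell}]=\sum_j F_{kj}\overline{F_{\ell j}}\,\E\abs{X_j}^2=\delta_{k\ell}$, and $\E[\lambda_k\lambda_\ell]=\sum_j F_{kj}F_{\ell j}\,\E X_j^2=0$, and these coincide with the corresponding moments of $n$ i.i.d.\ standard complex normals. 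Either route is short, and I would present the unitary-invariance argument as the main one since it is the most conceptual.
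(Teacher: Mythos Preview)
Your proof is correct and is essentially the same as the paper's: the paper observes that the map $(X_0,\dots,X_{n-1})\mapsto(\lambda_0,\dots,\lambda_{n-1})$ is a unitary transformation of $\C^n$ and hence preserves the standard Gaussian measure. Your write-up supplies a few more details (the explicit verification that $FF^*=I$ and the density argument for unitary invariance), but the argument is identical in substance.
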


\begin{proof}
The map $(X_0,\dotsc,X_{n-1}) \mapsto
(\lambda_0,\dotsc,\lambda_{n-1})$ defined by \eqref{E:lambda} is
easily checked to be a unitary transformation of $\C^n$, so it
preserves the standard Gaussian measure on $\C^n$.
\end{proof}

\medskip

An easy consequence of Proposition \ref{T:complex-normal} is that in
this setting, $\E \mu_n = \gamma_\C$ for every $n$, and not only in
the limit $n\to \infty$ as guaranteed by Theorem \ref{T:sm-ip}.

If each $X_j$ is a standard real normal random variable, then the same
observation implies that the sequence of eigenvalues $\lambda_0,
\dotsc, \lambda_{n-1}$ are jointly Gaussian random variables, but with
singular covariance since this sequence will lie in an $n$-dimensional
real subspace of the $2n$-dimensional space $\C^n$. On the other hand,
Proposition \ref{T:complex-normal} does have the following simple
analogue for complex Hermitian circulant matrices with Gaussian
entries.

\medskip

\begin{cor}\label{T:Hermitian}
Let $\CM_n^H$ be a Hermitian random circulant matrix
\[
\CM_n^H = \begin{bmatrix}
Y_0 & Y_1 & \cdots & \cdots & Y_{n-2} & Y_{n-1} \\
\overline{Y_{n-1}} & Y_0 & Y_1 & & & Y_{n-2} \\
\vdots & \overline{Y_{n-1}} & Y_0 & &&  \vdots \\
\vdots & & & \ddots & & \vdots \\
\overline{Y_2} & & & & Y_0 & Y_1 \\
\overline{Y_1} & \overline{Y_2} & \cdots & \cdots & \overline{Y_{n-1}} & Y_0
\end{bmatrix},
\]
where $Y_0,\ldots,Y_{\lfloor n/2 \rfloor}$ are independent,
$Y_j=Y_{n-j}$ for $j>n/2$, $Y_0$ and $Y_{n/2}$ (if $n$ is even) have
the standard real normal distribution, and $Y_j$ has the standard
complex normal distribution for $1\le j< n/2$. Then the eigenvalues of
$n^{-1/2}\CM_n^H$ are distributed as $n$ independent standard real
normal random variables.
\end{cor}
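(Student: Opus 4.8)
The plan is to deduce the statement from Proposition~\ref{T:complex-normal}. First note that the symmetry $Y_j=Y_{n-j}$, together with $Y_0$ (and $Y_{n/2}$, when $n$ is even) being real, already makes $\CM_n^H$ Hermitian, so its eigenvalues are automatically real; the content of the statement is the precise joint law, and in particular the variance.

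The idea is to exhibit a \emph{genuine} Hermitian circulant matrix with the same eigenvalue distribution. Let $\CM_n$ be the circulant matrix built from i.i.d.\ standard complex normal $X_0,\dots,X_{n-1}$ and put $H=\tfrac{1}{\sqrt2}(\CM_n+\CM_n^*)$. Then $H$ is a Hermitian circulant: its first row $(c_0,\dots,c_{n-1})$ has $c_0=\sqrt2\,\Re X_0$ and $c_j=\tfrac{1}{\sqrt2}(X_j+\overline{X_{n-j}})$ for $1\le j\le n-1$, and one checks that $c_j=\overline{c_{n-j}}$, that $c_0,\dots,c_{\lfloor n/2\rfloor}$ are independent, and that $c_0$ and $c_{n/2}$ are standard real normal while $c_j$ is standard complex normal for $1\le j<n/2$---exactly the law prescribed for $Y_0,\dots,Y_{\lfloor n/2\rfloor}$. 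Since $\CM_n$ and $\CM_n^*$ are simultaneously diagonalized by the Fourier matrix with conjugate eigenvalues, Proposition~\ref{T:complex-normal} gives that the eigenvalues of $n^{-1/2}H=\tfrac{1}{\sqrt2}\big(n^{-1/2}\CM_n+n^{-1/2}\CM_n^*\big)$ are the numbers $\sqrt2\,\Re\lambda_k$, with $\lambda_k$ as in \eqref{E:lambda}; these are i.i.d.\ $N(0,1)$. (In the spirit of Proposition~\ref{T:complex-normal}: the unitary map \eqref{E:lambda} of $\C^n$ restricts to an $\R$-linear isometry of the Hermitian-symmetric subspace $\{c:c_j=\overline{c_{n-j}}\}$ onto $\R^n$, and the prescribed Gaussian law on that subspace---in which each complex coordinate $c_j$, $1\le j<n/2$, carries twice the weight of each real coordinate, which is precisely why $Y_0$ and $Y_{n/2}$ are taken real normal and the rest complex normal---pushes forward to the standard Gaussian on $\R^n$.)

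It then remains to check that $n^{-1/2}\CM_n^H$ and $n^{-1/2}H$ have the same eigenvalue distribution, and this is the step I expect to be the main obstacle: $\CM_n^H$ is not literally a circulant (it is a Hermitian Toeplitz matrix, differing from a Hermitian circulant on its outermost diagonals), and its eigenvalues are not linear functions of the $Y_j$. One must instead match spectral laws through a measure-preserving change of variables in the $Y_j$'s---for $n=3$ the substitution $Y_1\mapsto|Y_1|\,e^{i\arg(Y_1)/3}$ works, since it preserves the standard complex normal law up to a relabeling of the three cube roots (absorbed by the symmetry of the eigenvalue multiset) and turns the characteristic polynomial of $\CM_3^H$ into that of a Hermitian circulant---with the general case handled either by the analogous (if less explicit) identification of the spectrum of $\CM_n^H$ with that of a Gaussian Hermitian circulant, or by matching the moments $\E\operatorname{tr}\big((\CM_n^H)^p\big)$, $p\ge 1$, against those of $H$ under the prescribed Gaussian inputs.
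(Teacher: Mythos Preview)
Your first two paragraphs are essentially the paper's proof verbatim: the paper simply notes that $\CM_n^H$ has the same distribution as $\frac{1}{\sqrt{2}}(\CM_n+\CM_n^*)$, uses normality of $\CM_n$ to read off the eigenvalues of $n^{-1/2}\CM_n^H$ as $\sqrt{2}\,\Re\lambda_k$, and invokes Proposition~\ref{T:complex-normal}. You have all of this.

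The ``main obstacle'' in your third paragraph is illusory; you have been tripped up by a typo in the statement. The matrix $\CM_n^H$ is meant to be a \emph{circulant} matrix (as its name, the paper's own proof, and the whole context indicate), and for a circulant with first row $(Y_0,\dots,Y_{n-1})$ to be Hermitian one needs $Y_j=\overline{Y_{n-j}}$, not $Y_j=Y_{n-j}$. This corrected constraint is exactly the relation $c_j=\overline{c_{n-j}}$ that you yourself verified for the first row of $H=\frac{1}{\sqrt{2}}(\CM_n+\CM_n^*)$. With the intended reading, $H$ and $\CM_n^H$ are identically distributed \emph{as matrices}, so there is nothing further to match; your proposed change of variables and moment computations are unnecessary. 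Delete the third paragraph and you are done.
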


\begin{proof}
If $\CM_n$ is as in Proposition \ref{T:complex-normal}, then $\CM_n^H$
has the same distribution as $\frac{1}{\sqrt{2}} (\CM_n + \CM_n^*)$.
Because $\CM_n$ is normal, the eigenvalues of $n^{-1/2}\CM_n^H$ are
\[
\frac{1}{\sqrt{2}}(\lambda_k + \overline{\lambda_k}) = 
\sqrt{2} \Re \lambda_k
\]
for $k=0,\dotsc,n-1$, which by Proposition \ref{T:complex-normal}
are independent standard real normal random variables.
\end{proof}

\medskip

The Gaussian Hermitian circulant matrix $\CM_n^H$ of Corollary
\ref{T:Hermitian} should be thought of as a circulant analogue of the
Gaussian Unitary Ensemble, which (up to a choice of normalization) is
defined as $\frac{1}{\sqrt{2}}(G+G^*)$, where $G$ is an $n\times n$
random matrix whose entries are independent standard complex normal
random variables.

\medskip

\begin{cor}\label{E:extremes}
Let $\CM_n$ and $\CM_n^H$ be as in Proposition \ref{T:complex-normal}
and Corollary \ref{T:Hermitian}. Let $\alpha_1 \ge \dotsb \ge \alpha_n
\ge 0$ be the eigenvalues of $n^{-1} \CM_n \CM_n^*$ and let $\beta_1 \ge
\dotsb \ge \beta_n$ be the eigenvalues of $n^{-1/2} \CM_n^H$. Then
$\alpha_n$ has an exponential distribution with mean $1/n$, and
\[
\alpha_1-\log n
\]
and 
\[
\sqrt{2\log n} \left(\beta_1-\sqrt{2\log n} -
\frac{\log\log n + \log 4\pi}{2\sqrt{2\log n}}\right)
\]
both converge in distribution as $n\to\infty$ to the Gumbel
distribution with cumulative distribution function $e^{-e^{-x}}$.
\end{cor}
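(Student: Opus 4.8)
The plan is to reduce all three assertions to classical facts about the largest and smallest order statistics of an i.i.d.\ sample, using the exact distributional identities already proved. First I would use that a circulant matrix is normal, so that the singular values of $\CM_n$ are the moduli of its eigenvalues; consequently the eigenvalues of $n^{-1}\CM_n\CM_n^*$ are exactly $\abs{\lambda_0}^2,\dotsc,\abs{\lambda_{n-1}}^2$, where the $\lambda_k$ are as in \eqref{E:lambda}. By Proposition~\ref{T:complex-normal} these $\lambda_k$ are i.i.d.\ standard complex normal, so the $\abs{\lambda_k}^2$ are i.i.d., and integrating the density $\tfrac1\pi e^{-\abs{z}^2}$ over a disc shows that $\abs{\lambda_0}^2$ is exponential with mean $1$, i.e.\ $\Prob[\abs{\lambda_0}^2 > t]=e^{-t}$ for $t\ge 0$. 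Hence $\alpha_n=\min_{0\le k<n}\abs{\lambda_k}^2$ satisfies $\Prob[\alpha_n>t]=e^{-nt}$, so it is exponential with mean $1/n$, which is the first assertion.

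For $\alpha_1=\max_{0\le k<n}\abs{\lambda_k}^2$ I would just compute directly: for fixed $x\in\R$ and all $n$ large enough that $x+\log n\ge 0$,
\[
\Prob[\alpha_1-\log n\le x]=\bigl(1-e^{-(x+\log n)}\bigr)^n=\Bigl(1-\tfrac{e^{-x}}{n}\Bigr)^n\xrightarrow{n\to\infty}e^{-e^{-x}},
\]
which is the stated Gumbel limit (the elementary instance of the Fisher--Tippett--Gnedenko theorem for exponential tails). For $\beta_1$, Corollary~\ref{T:Hermitian} identifies $\beta_1,\dotsc,\beta_n$ as $n$ i.i.d.\ standard \emph{real} normal variables, so $\beta_1$ is the maximum of an i.i.d.\ $N(0,1)$ sample; the conclusion is then the classical statement that $N(0,1)$ lies in the max-domain of attraction of the Gumbel law, with the displayed centering and scaling being the standard normalizing sequences. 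These arise from inverting the tail asymptotics $1-\Phi(t)\sim(t\sqrt{2\pi})^{-1}e^{-t^2/2}$ to solve $n\bigl(1-\Phi(a_n)\bigr)\to 1$, and a standard extreme-value-theory reference can be cited for the precise statement and constants.

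I do not expect a genuine obstacle: once Proposition~\ref{T:complex-normal} and Corollary~\ref{T:Hermitian} are available, every claim is classical. The only step that needs a little care is the bookkeeping of the Gaussian normalizing constants for $\beta_1$ — one should check that the centering and scaling written in the statement agree with the standard sequences (equivalently, with the asymptotic expansion of the Gaussian quantile $\Phi^{-1}(1-1/n)$), since such formulas are easy to misstate by a sign or a constant.
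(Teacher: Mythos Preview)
Your proposal is correct and follows essentially the same route as the paper: use normality of $\CM_n$ to identify the eigenvalues of $n^{-1}\CM_n\CM_n^*$ with $\abs{\lambda_k}^2$, invoke Proposition~\ref{T:complex-normal} to obtain i.i.d.\ exponentials, invoke Corollary~\ref{T:Hermitian} to obtain i.i.d.\ standard normals for the Hermitian case, and then appeal to classical extreme value theory. The paper's proof is terser (it simply cites a reference for the extreme-value step rather than writing out the exponential computation or discussing the Gaussian normalizing constants), but the argument is the same.
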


\begin{proof}
Observe that since $\CM_n$ is normal, the eigenvalues of $n^{-1}\CM_n
\CM_n^*$ are $\abs{\lambda_k}^2$ (though not usually in the same
order), which by Proposition \ref{T:complex-normal} are distributed as
independent exponential random variables. The corollary follows by
combinging this fact and Corollary \ref{T:Hermitian} with classical
theorems of extreme value theory (see \cite[Chapter 1]{LLR}).
\end{proof}

\medskip

The asymptotic distributions of $\beta_1$ and $\alpha_1$ are circulant
matrix analogues of famous results of Tracy and Widom \cite{TW1} and
Johnstone \cite{Johnstone}, respectively. Davis and Mikosch \cite{DM},
who did not consider random circulant matrices explicitly, proved
(with slight modifications) what amounts to a universality result for
$\alpha_1$, which shows the conclusion follows for quite general
distributions of the $X_j$. Following their method Bryc and Sethuraman
\cite{BrycSeth} proved an explicitly stated universality result for
$\beta_1$; these are then the circulant matrix analogues of the
results of Soshnikov in \cite{Soshnikov2,Soshnikov1} respectively. The
rough orders of magnitude of $\alpha_1$ and $\beta_1$ follow in even
greater generality from work of the author \cite{Meckes} and Adamczak
\cite{Adamczak} (see the remarks in \cite[Section 3.1]{Meckes}).

\section{Singularity of circulant sign matrices}\label{S:singularity}

In this section we specialize to the case in which $\Prob[X_j=-1] =
\Prob[X_j=1]=1/2$ for every $j$ and consider the probability that
$\CM_n$ is singular. The corresponding problem for random $n\times n$
matrices $M_n$ with independent $\pm 1$ entries has a long history. An
old conjecture (see \cite{KKS}) claims that
\[
\Prob[M_n \mbox{ is singular}] = \left(\frac{1}{2}+o(1)\right)^n,
\]
which is asymptotically the probability that two rows of $M_n$ are
equal up to sign.  The best result currently known, proved by
Bourgain, Vu, and Wood in \cite{BVW}, is
\[
\Prob[M_n \mbox{ is singular}]\le \left(\frac{1}{\sqrt{2}} + o(1)\right)^n.
\]
By contrast, the following result shows that the singularity
probability of an $n\times n$ random circulant matrix with $\pm 1$
entries depends strongly on the number-theoretic properties of $n$.

\medskip

\begin{thm}\label{T:singularity}
Let $\Prob[X_j = -1]=\Prob[X_j=1]=1/2$ for each $j$. If $n$ is even, then
\begin{equation}\label{E:sing-even}
  \frac{c_1}{\sqrt{n}} \le \Prob[ \CM_n \mbox{ is singular}] 
  \le \frac{c_2}{\sqrt{n}},
\end{equation}
where $c_1,c_2>0$ are absolute constants. If $n\ge 3$ is odd, then
\begin{equation}\label{E:sing-odd}
\Prob[ \CM_n \mbox{ is singular}] \le \min \Bigg\{
  c_3 \frac{d(n)}{n}, \sum_{\substack{1<m\le n \\ m|n}} 2^{-\varphi(m)}\Bigg\},
\end{equation}
where $c_3>0$ is an absolute constant, $d(n)$ is the number of
divisors of $n$, and $\varphi(m)$ is the number of positive integers
less than $m$ which are relatively prime with $m$.
\end{thm}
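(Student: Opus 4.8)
The plan is to reduce singularity of $\CM_n$ to a divisibility question about the random polynomial $P(z) = \sum_{j=0}^{n-1} X_j z^j \in \Z[z]$. Since the eigenvalues of $\CM_n$ are $P(\omega_n^k)$ for $0 \le k \le n-1$, the matrix is singular exactly when $P$ vanishes at some $n$th root of unity, that is, when $P$ has a nontrivial common factor with $z^n - 1 = \prod_{m \mid n} \Phi_m(z)$; since $P \in \Z[z]$ and each cyclotomic polynomial $\Phi_m$ is monic and irreducible over $\mathbb{Q}$, this is equivalent to $\Phi_m \mid P$ in $\Z[z]$ for some divisor $m$ of $n$. Hence $\Prob[\CM_n \text{ is singular}] \le \sum_{m \mid n} \Prob[\Phi_m \mid P]$, while for the lower bound in \eqref{E:sing-even} one has $\Prob[\CM_n \text{ is singular}] \ge \Prob[\Phi_1 \mid P] = \Prob[\sum_j X_j = 0] = \binom{n}{n/2} 2^{-n} \ge (2n)^{-1/2}$ when $n$ is even. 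The central estimate I would establish is
\[
\Prob[\Phi_m \mid P] \le q_{n/m}^{\varphi(m)}, \qquad q_\ell := \max_{c \in \Z} \Prob[\epsilon_1 + \dotsb + \epsilon_\ell = c] = \binom{\ell}{\lfloor \ell/2 \rfloor} 2^{-\ell},
\]
where the $\epsilon_i$ are i.i.d.\ signs; note $q_\ell \le \tfrac12$ for every $\ell \ge 1$ and $q_\ell \le \ell^{-1/2}$.

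To prove this estimate, reduce $P$ modulo $z^m - 1$: setting $S_r = \sum_{0 \le j < n,\ j \equiv r \,(\mathrm{mod}\ m)} X_j$, one has $\Phi_m \mid P$ if and only if $\Phi_m$ divides $\sum_{r=0}^{m-1} S_r z^r$, and because $m \mid n$ each $S_r$ is a sum of exactly $\ell := n/m$ independent signs, with $S_0, \dotsc, S_{m-1}$ mutually independent. Expanding $\sum_r S_r \omega_m^r$ in the $\mathbb{Q}$-basis $1, \omega_m, \dotsc, \omega_m^{\varphi(m)-1}$ of $\mathbb{Q}(\omega_m)$ rewrites the vanishing condition as the system of $\varphi(m)$ equations $S_s = -\,(\text{a fixed }\Z\text{-linear combination of } S_{\varphi(m)}, \dotsc, S_{m-1})$, $0 \le s < \varphi(m)$; conditioning on $S_{\varphi(m)}, \dotsc, S_{m-1}$ leaves $\varphi(m)$ independent point events, each of probability at most $q_\ell$, which yields the bound.

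The two stated upper bounds then follow by summing over divisors. When $n$ is odd the term $m = 1$ vanishes (as $\sum_j X_j$ is odd) and $m = 2$ does not occur, so $\Prob[\CM_n \text{ is singular}] \le \sum_{m \mid n,\ m > 1} q_{n/m}^{\varphi(m)} \le \sum_{m \mid n,\ m > 1} 2^{-\varphi(m)}$, which is the second entry of the minimum in \eqref{E:sing-odd}. For the other entry $c_3 d(n)/n$, and also for the upper bound in \eqref{E:sing-even}, I would show that $\Prob[\Phi_m \mid P] \le C/n$ for an absolute constant $C$, uniformly over divisors $m \ge 3$ of $n$, of which there are fewer than $d(n) \le 2\sqrt n$. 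This I would split into three cases: (a) $m \in \{3, 4, 6\}$, the only integers with $\varphi(m) = 2$, where $q_{n/m}^2 \le m/n \le 6/n$; (b) $m \le \sqrt n$ with $\varphi(m) \ge 4$, where $q_{n/m} \le (n/m)^{-1/2} \le n^{-1/4}$ gives $q_{n/m}^{\varphi(m)} \le n^{-1}$; and (c) $m > \sqrt n$, where the elementary bound $\varphi(m) \ge c_0 \sqrt m \ge c_0 n^{1/4}$ together with $q_{n/m} \le \tfrac12$ gives $q_{n/m}^{\varphi(m)} \le 2^{-c_0 n^{1/4}} \le 1/n$ once $n$ exceeds an absolute threshold (the finitely many smaller $n$ being trivial since $\Prob[\CM_n\text{ is singular}]\le 1$). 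Summing over the fewer than $d(n)$ divisors $m > 1$ gives $\Prob[\CM_n \text{ is singular}] \le c_3 d(n)/n$ when $n$ is odd; and when $n$ is even, $\Prob[\Phi_1 \mid P]$ and $\Prob[\Phi_2 \mid P]$ are each $O(n^{-1/2})$ while the terms with $m \ge 3$ contribute at most $C d(n)/n \le 2C/\sqrt n$, so $\Prob[\CM_n \text{ is singular}] \le c_2/\sqrt n$.

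The main obstacle is the key estimate $\Prob[\Phi_m \mid P] \le q_{n/m}^{\varphi(m)}$ — obtaining the sharp exponent $\varphi(m)$ depends on the cyclotomic reduction and on the independence of $S_0, \dotsc, S_{m-1}$ — together with turning it into a clean uniform $O(1/n)$ bound: the crude consequence $q_{n/m}^{\varphi(m)} \le (m/n)^{\varphi(m)/2}$ loses logarithmic factors for $m$ of intermediate size, so one must exploit that $\varphi(m)$ is genuinely of order $\sqrt m$ (hence large once $m > \sqrt n$) and that exactly three values of $m$ have $\varphi(m) = 2$.
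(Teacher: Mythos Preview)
Your argument is correct and shares the paper's overall framework---reducing singularity to $\Phi_m \mid P$ for some divisor $m$ of $n$, then applying a union bound over divisors---but differs in how the per-divisor probabilities are controlled. The paper obtains $\Prob[f(\omega_n^k)=0]\le c/n$ for $k\neq 0,n/2$ by invoking the multidimensional Littlewood--Offord theorem of Friedland and Sodin as a black box, and separately bounds $\Prob[\Phi_m\mid P]\le 2^{-\varphi(m)}$ by a direct dimension count on the $n$ coefficients of $P$ (the set of degree-$<n$ multiples of $\Phi_m$ is an $(n-\varphi(m))$-dimensional subspace). You instead reduce $P$ modulo $z^m-1$ to obtain independent block sums $S_0,\dots,S_{m-1}$ and derive the single sharper estimate $\Prob[\Phi_m\mid P]\le q_{n/m}^{\varphi(m)}$, from which both upper bounds follow: the $2^{-\varphi(m)}$ bound via $q_\ell\le 1/2$, and the uniform $O(1/n)$ bound via an elementary case split according to the size of $m$. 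Your route is more self-contained---no external anti-concentration result is needed---and yields a strictly stronger intermediate inequality; the paper's route is shorter, at the cost of quoting an outside theorem.
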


The lower bound in \eqref{E:sing-even} shows that if $n$ is even, then
an $n\times n$ circulant sign matrix is much more likely to be
singular than a sign matrix with independent entries. The first upper
bound in \eqref{E:sing-odd} implies that the singularity probability is
rather lower if $n$ is odd; it is known that
\begin{equation}\label{E:d(n)}
d(n) \le n^{c/\log \log n},
\end{equation}
for every $n$, see \cite[Theorem 13.12]{Apostol}. The bound is of
course smaller if $n$ has few divisors; for example if $n=p^k$ for an
odd prime $p$ then $d(n)=k+1=\log_p n + 1$.  The second upper bound in
\eqref{E:sing-odd} implies that the singularity probability is extremely
small if $n$ has no small prime factors.  In particular, if $n\ge 3$
is prime then
\[
\Prob[ \CM_n \mbox{ is singular}] = 2^{-n+1}
\]
since this is the probability that all the entries of $\CM_n$ are
equal.  It would be nice to have a more complete description of the
dependence of the singularity probability on the prime factorization
of $n$.

\begin{proof}
Begin by defining the random polynomial
\[
f(t) = \sum_{j=0}^{n-1} X_j t^j,
\]
so that the eigenvalues of $\CM_n$ are $f(\omega_n^k)$ for
$k=0,\dotsc,n-1$. Observe first that $f(1)$ and $f(-1)$ are identically
distributed and 
\begin{equation}\label{E:f(1)}
\Prob[f(1)=0] = \Prob[f(-1)=0] = 
\begin{cases} 2^{-n} \binom{n}{n/2} & \mbox{if $n$ is even},\\
  0 & \mbox{if $n$ is odd}.\end{cases}
\end{equation}
which implies the lower bound in \eqref{E:sing-even} by Stirling's
formula.

For each $k>0$, $\omega_n^k$ is a primitive root of unity of order
$n/m$, where $m=\gcd(k,n)$. The minimal polynomial of $\omega_n^k$
over the rational numbers is thus the cyclotomic polynomial
$\Phi_{n/m}$, so $f(\omega_n^k)=0$ if and only if $\Phi_{n/m}$ is a
factor of $f$. (See e.g.\ \cite[Section V.8]{Hungerford} for
background on cyclotomic polynomials.) Therefore we need only consider
the cases when $k$ is a divisor of $n$, and
\begin{equation}\label{E:union}
\Prob [ \CM_n \mbox{ is singular}] \le \Prob[f(1)=0] +
\sum_{\substack{1\le k < n\\k|n}} \Prob \big[f(\omega_n^k) =
  0\big].
\end{equation}
A straightforward application of the multidimensional inverse
Littlewood-Offord theorem proved by Friedland and Sodin \cite{FS}
yields that
\begin{equation}\label{E:FS}
\Prob \big[f(\omega_n^k)=0\big] \le \frac{c}{n}
\end{equation}
for some $c>0$ when $k\neq 0, n/2$; the calculations involved in
applying the result of \cite{FS} are similar to those in the proof of
Theorem \ref{T:sm-ip}. The upper bound of \eqref{E:sing-even} and the
first upper bound of \eqref{E:sing-odd} follow by combining
\eqref{E:union}, \eqref{E:f(1)}, and \eqref{E:FS}.  To bound the
number of terms in \eqref{E:union} in the even case it is enough to
use the trivial estimate $d(n) < 2\sqrt{n}$ (divisors of $n$ occur in
pairs $k,n/k$ with $k\le \sqrt{n}$) rather than the much more delicate
result \eqref{E:d(n)}.

For $n\ge 3$ and $d\ge 2$ let $P_{n,m}$ be the set of polynomials $g$
with rational coefficients of degree at most $n-1$ such that $\Phi_m$
is a factor of $g$. With the substitution $m=n/k$, \eqref{E:union}
becomes
\[
\Prob [ \CM_n \mbox{ is singular}] \le \sum_{\substack{1< d \le
    n\\d|n}} \Prob \big[ f\in P_{n,m}\big].
\]
Since $\Phi_m$ has degree $\varphi(m)$, $P_{n,m}$ is an
$(n-\varphi(m))$-dimensional subspace of the (rational) vector space
of polynomials of degree at most $n-1$. Therefore some set of
$n-\varphi(m)$ coefficients of a polynomial $g$ suffice to determine
whether $g\in P_{n,m}$, and so
\[
\Prob \big[ f\in P_{n,m}\big] \le 2^{-\varphi(m)},
\]
which proves the second upper bound in \eqref{E:sing-odd}.
\end{proof}


\section*{Acknowledgements}

The author thanks W{\l}odek Bryc and Elizabeth Meckes for helpful
discussions.

\bibliographystyle{plain} \bibliography{circulant}

\end{document}